\definecolor{steelblue}{RGB}{70,130,180}
\definecolor{warmred}{RGB}{244, 104, 65}
\definecolor{steelblue}{RGB}{70,130,180}
\definecolor{warmred}{RGB}{244, 104, 65}
\newtheorem{assumption}{Assumption}
\newtheorem{remark}{Remark}
\newtheorem{definition}{Definition}
\newtheorem{theorem}{Theorem}
\begin{document}
\title{Optimal Power Flow with State Estimation \\In the Loop for Distribution Networks}

\author{Yi Guo,~\IEEEmembership{Member,~IEEE,}
Xinyang Zhou,~\IEEEmembership{Member,~IEEE,}
Changhong Zhao,~\IEEEmembership{Senior Member,~IEEE,}\\
Lijun Chen,~\IEEEmembership{Member,~IEEE,} 
and Tyler Holt Summers,~\IEEEmembership{Member,~IEEE}
\thanks{This material is based on work supported by funding from US Department of Energy Office of Energy Efficiency and Renewable Energy Solar Energy Technologies Office under contract No. DE-EE-0007998 and the National Science Foundation under grant CMMI-1728605. This work was partially supported by an ETH Z\"{u}rich Postdoctoral Fellowship and Hong Kong RGC Early Career Award No. 24210220.}
\thanks{Y. Guo is with the Power Systems Laboratory, ETH Z\"urich, Z\"urich, 8092, Switzerland, email: yiguoy@ethz.ch.}
\thanks{X. Zhou is with Department of Power System Engineering, National Renewable Energy Laboratory, Golden, CO 80401, USA, email: xinyang.zhou@nrel.gov.}
\thanks{C. Zhao is with Department of Information Engineering, the Chinese University of Hong Kong, HKSAR, China, email: chzhao@ie.cuhk.edu.hk.}
\thanks{L. Chen is with College of Engineering and Applied Science, The University of Colorado, Boulder, CO 80309, USA, email: lijun.chen@colorado.edu.}
\thanks{T. Summers is with the Department
of Mechanical Engineering, The University of Texas at Dallas, Richardson, TX 75080, USA, email: tyler.summers@utdallas.edu.}
}
\maketitle

\begin{abstract}
We propose a framework for running optimal control-estimation synthesis in distribution networks. Our approach combines a primal-dual gradient-based optimal power flow (OPF) solver with a state estimation (SE) feedback loop based on a limited set of sensors for system monitoring, instead of assuming exact knowledge of all states. The estimation algorithm reduces uncertainty on unmeasured grid states based on certain online state measurements and noisy ``pseudo-measurements''. We analyze the convergence of the proposed algorithm and quantify the statistical estimation errors based on a weighted least squares (WLS) estimator. The numerical results on a 4,521-node network demonstrate that this approach can scale to extremely large networks and provide robustness to both large pseudo-measurement variability and inherent sensor measurement noise.
\end{abstract}

\begin{IEEEkeywords}
optimal power flow, state estimation, feedback control, large-scale networks, voltage regulation, distribution networks and power systems.
\end{IEEEkeywords}

\section{Introduction}
% background and motivation
\IEEEPARstart{T}{he} increasing penetration of distributed energy resources (DERs) has provided an opportunity to explore the benefits of advanced smart grid technologies in  distribution networks. As the heterogeneous control strategies of grid-connected elements dominate distribution networks, many customers will become active and motivated end-users to optimize their own power usage via optimal power flow (OPF) methods \cite{opf,dommel,alsac,Baldick,low1}. This requires the power system control scheme to have real-time knowledge about the structure and state of the distribution network (e.g., operation states, net-loads variation, device dynamics, network topology, etc.), and to provide the corresponding real-time responses (e.g., optimal control inputs. set-points of DERs, etc.) for safe and efficient operation. However, the current distribution network control paradigm cannot satisfy the above requirement due to lack of real-time knowledge about the system state, and high expense of real-time state measurement. Future distribution systems will require more sophisticated and tightly integrated control, optimization, and estimation methods for these issues.

% System operators must deal with significant variability from renewable energy resources and heterogeneous customer behaviors. In order to achieve an accurate and reliable real-time distribution network operation, future distribution networks will require a more sophisticated control scheme for an efficient and safe operation.
Most OPF methods for distribution networks in the literature assume complete availability of network states to implement various optimal control strategies \cite{summers2015stochastic,molzahn2019survey,wu1990power,abido2002optimal,wang2020asynchronous}. However, in practice network states must be estimated with a monitoring system from noisy measurements, which itself is a challenging problem due to the increasingly complex, extremely large-scale, and nonlinear time-varying nature of emerging networks. To solve these issues, the recently proposed OPF frameworks \cite{colombino2019online,zhou2017incentive,dall2016optimal,li2017distribued,bernstein2019real,bolognani2014distributed} leverage measurement feedback-based online optimization method to loop the physical measurement information back to OPF controllers, which adapt the OPF decisions to real-time data to mitigate the effects of inherent disturbances and modelling errors. It is unrealistic to have real-time physical measurements of system states at every corner of distribution networks due to heavy communication burdens, end-user privacy concerns, and high costs.

In this paper, we propose a more general framework than the existing OPF approaches, which tightly integrates state estimation (SE) techniques \cite{liu2012trade,dehghanpour2019game,abur2004power,dehghanpour2018survey,schweppe1970power} into online OPF control algorithms for distribution networks. This OPF with SE in the loop framework allows us to utilize a limited set of sensor measurements together with a power system state estimator instead of exact knowledge of network states. The power system state estimator, which may include data from the Supervisory Control and Data Acquisition (SCADA) system, phasor measurement units (PMUs), topology processor and pseudo-measurements\footnote{Due to the lack of real time measurement and stochasticity nature of power net-loads in distribution system state estimation, the nodal power injections are measured by their nominal load-pattern (i.e., the real value plus zero-mean random deviations), so-called pseudo-measurement, whose information is derived from the past records of load behaviors \cite{schweppe1970power}.}, provides the best available information about network states \cite{primadianto2016review, ahmad2018distribution,dehghanpour2019game,dehghanpour2018survey,liu2012trade,zamzam2019data} and in-turn enables implementation and enhances the performance of OPF controllers. Our approach allows OPF decisions to adapt to real-time time-varying stochastic DERs and loads, and compensates for disturbances and modelling errors, since SE results utilize measurement data from the actual nonlinear system response. 

A preliminary version of this work appeared in \cite{guo2019solving}, and here we significantly expand the work in several directions into the present paper. 
% The overall control-estimation framework will be discussed under different time-scale setup.
% In Part I, we mostly focus on the general idea development of running control-estimation synthesis in distribution networks. The OPF problem and the SE problem are posted and solved in different time-scale. The real-time (fast) OPF controllers updates their gradienet steps based on the completed solved SE results (slow) at each iteration.
% The proposed OPF
% Here in Part I, we initialize a more comprehensive discussion on the concept of OPF with SE in the loop. Then, by explicitly including the variance of SE, we establish a statistical bound to show convergence of the proposed OPF with SE method. The proof-of-concept is demonstrated thought numerical studies on a large-scale {\color{red}4,521-node} distribution network to demonstrate the flexibility. Finally, we present the numerical analysis of SE in the loop to further promote its excellent robustness to measurement noises on OPF solvers.
Our main contributions are summarized as follows:
\begin{itemize}
\item[1)] We formulate a general convex OPF problem subject to power flow equations and network-wise coupling constraints. To integrate OPF with SE in the loop, we propose a primal-dual gradient-based OPF algorithm with state estimation feedback. Instead of requiring full knowledge of all system states, the controller utilizes at every gradient step real-time monitoring information from state estimation results to inform control decisions. Although OPF and SE problems for distribution networks have been widely studied individually, none of the existing literature reveals the connection and bridges the gap between them. Here we are closing the loop between OPF and state estimation in large-scale distribution networks by utilizing state estimates \cite{gomez2004power,wu1990power}. This allows us to react to real-time information of system states with a limited number of deployed sensors. In principle, the proposed framework is compatible with a variety of state estimation methods and control strategies in distribution networks. Here, we illustrate the approach through a voltage regulation problem, with voltage magnitude estimation in the loop. 
% In contrast to requiring all information from measurement, we estimate the voltage profile through a few measurement. This approach promote the practical implementation of OPF algorithm in the large-scale distribution networks.

% Convergence analysis and state estimation errors analysis
\item[2)] We leverage linear approximations to the AC power flow equations to facilitate scalable and computationally efficient OPF formulation for SE feedback integration \cite{dall2016optimal,colombino2019online,zhou2017incentive}. The voltage profile estimation uses a weighted least squares (WLS) estimator. Convergence of the proposed gradient-based algorithm with state estimation feedback is analyzed. Additionally, we quantify the statistical estimation errors of the WLS estimator and show how the errors influence the theoretical results and convergence proofs in \cite{guo2019solving}. This analysis provides a measure of quality of the SE feedback associated with a particular allocation of sensors across the network. 

% large-scale and distributed algorithm implementation
\item[3)]The effectiveness, scalability, flexibility and robustness of the proposed algorithm are demonstrated on a 4,521-node multi-phase unbalanced distribution network with 1043 (aggregated) net-loads. With only 3.6\% voltage measurement deployment, the integrated OPF controller with SE feedback effectively regulates network voltage. The distributed algorithm using linearized distribution flow (LinDistFlow) enables scaling to extremely large networks. The numerical results also indicate that the proposed OPF controller with SE feedback is robust to the inherent measurement noise and estimation errors. 

% \color{red}
% \item[4)] An online implementation of the proposed framework is investigated in Part II \cite{guo2021running2}. We propose an online primal-dual gradient-based algorithm to solve the time-varying OPF and SE problems in parallel. The problem formulations explicitly consider the estimation noises and linearization errors to develop an online stochastic controller. The extensive case studies in an IEEE 37-node distribution network demonstrate the efficient and robust performance of proposed framework under large variations of photovoltaic (PV) generation and demand.
% \color{black}

\end{itemize}
% \color{red}
% Part I of this paper focuses on general formulation of OPF problems with SE feedback in distribution networks. The OPF and SE problems are posed and solved in different time-scales \textit{(OPF with SE in the loop)}, where the real-time OPF gradient updates are based on fully solved SE results. Part II \cite{guo2021running2} seeks an online implementation of the proposed framework \textit{(Online Joint OPF-SE)}. The OPF and SE problems are formulated under a time-varying setting and solved jointly over time. The gradient updates of OPF controllers leverage the gradient flows of SE problems generated in parallel for computational efficiency. 
% \color{black}

The rest of this paper is organized as follows. Section~\ref{sec:idea} discusses the general concept of OPF with SE in the loop for distribution networks. Section~\ref{sec:model} formulates an OPF problem and introduces gradient algorithm with state estimation feedback. Section~\ref{sec:num} demonstrates numerical results on voltage regulations and Section~\ref{sec:con} concludes.

\section{Optimal Power Flow with State Estimation in the loop}\label{sec:idea}
In this section, we propose an OPF solver with state estimation feedback. We first pose a general problem to highlight the overall approach, and in subsequent sections we detail the model, objectives, constraints and state estimator for certain control and monitoring purposes. 
% This framework is more general than most OPF problems for distribution networks in the literature, which typically only focus on OPF problem formulation and optimality.

Consider an OPF problem for distribution networks:
\begin{subequations}\label{eq:opt}
\begin{eqnarray}
& \underset{\mathbf{p},\mathbf{q}}{\min} & \sum_{i\in\mathcal{N}}C_i(p_i,q_i)+ C_0(\mathbf{p},\mathbf{q}),\\
% & \text{s.t.}&\mathbf{r}=f(\mathbf{p},\mathbf{q}),\\
&\text{s.t.} &\mathbf{g}(\mathbf{r}(\mathbf{p},\mathbf{q})) \leq 0, \label{eq:voltreg}\\
&& (p_i,q_i)\in\mathcal{Z}_i,\forall i\in\mathcal{N},\label{eq:X} 
\end{eqnarray}
\end{subequations}
where $C_0(\mathbf{p},\mathbf{q})$ is a cost function capturing system objectives (e.g., cost of deviation of total power injections into the substation from preferred values) and the local objective functions $C_i(p_i,q_i)$ capture (a weighted mix of) generation costs, ramping costs, active power losses, renewable curtailment penalty, auxiliary service expenses, reactive compensation (comprising a weighted sum thereof), etc. at node $i \in \mathcal{N}$. 

We then define a vector $\mathbf{r(\mathbf{p},\mathbf{q})} \in \mathbb{R}^{N_r}$ collecting the electrical quantities of an operator's interest (e.g., voltage magnitudes, current injections, power injection at the substation, etc.), which depends on nodal power injections $\mathbf{p} :=[p_1,\dots,p_N]^\top$ and $\mathbf{q} :=[q_1,\dots,q_N]^\top$ through power flow equations.\footnote{The power flow equations may model nonlinear AC power flow as well as its SOCP relaxations, SDP relaxations, or various linearizations. The general approach of OPF with SE in the loop can be adapted to various power flow models. In the rest of this paper, we use a linearized power flow model to illustrate the effectiveness of the proposed framework.} The function $\mathbf{g}:\mathbb{R}^{N_r}\rightarrow \mathbb{R}^{N_g}$ models network constraints, including those on voltage magnitudes and angles, current injections, and line flows. The nodal power injections $(\mathbf{p},\mathbf{q})$ are constrained within convex and compact feasible sets $\mathcal{Z}_i$.

\begin{figure}[!tbhp]
    \centering
    \includegraphics[width=3.5in]{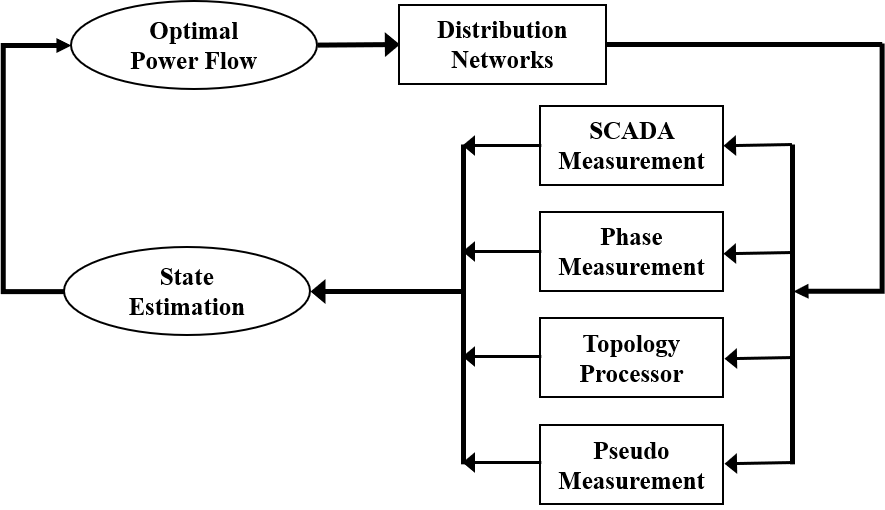}
    \caption{The concept of solving optimal power flow with state estimation in the loop.}
    \label{fig:OPF_SE_Loop}
\end{figure}

Problem~\eqref{eq:opt} is typically solved assuming that all needed system states are available. However, in practice, there is generally a lack of reliable measurement devices and communication infrastructure in distribution networks, rendering most system states directly unavailable and thus the conventional OPF approaches unimplementable. Therefore, the main challenges for solving \eqref{eq:opt} in practice lie in how to best integrate the \emph{estimates} of current system states $\mathbf{r}$ and understand tradeoffs between sensor deployment cost, SE performance, and OPF controller performance. We will tackle these challenges by integrating a state estimation feedback loop with a limited number of sensor measurements into OPF solvers and analytically characterize the overall performance thereof. This allows the OPF controller to respond to real-time information and update control decisions despite nodes without measurement in the grid. The overall approach is illustrated in Fig.~\ref{fig:OPF_SE_Loop}.

In our general framework, the SE techniques may determine the system states using any or all of SCADA measurements, phasor measurement units (PMUs) measurements, pseudo-measurements and topology information to reduce estimation uncertainty. How to fuse different sources of information into OPF formulations remains largely an open question under exploration. We aim to indicate that there are many possibilities and research direction to potentially improve control and optimization in distribution networks through tight integration with SE. The increasing penetration of renewable energy resources and distributed generators enable the distribution networks with smart features, such as demand response and distributed automation. This allows the networks turn to a more active and complex system with fast system response. An efficient, real-time monitoring of distribution networks should be looped into OPF controllers. In the rest of this paper, we take the voltage regulation problem with voltage estimation as an illustrative example, which is based on a few PMU voltage measurements and net-loads pseudo-measurements. 

% The distribution system state estimation (DSSE) will play a critical role in the implementation of OPF for an extreme large network scale \cite{primadianto2016review}. 

\section{Gradient-Based OPF Solver with State Estimation Feedback}\label{sec:model}
\subsection{System Modelling}
Consider a distribution network denoted by a directed and connected graph $\mathcal{G}(\mathcal{N}_0,\mathcal{E})$, where $\mathcal{N}_0:= \mathcal{N}\cup\{0\}$ is a set of all ``buses" or ``nodes" with substation node 0 and $\mathcal{N}:= \{1,\dots,N\}$, and $\mathcal{E} \subset \mathcal{N}\times\mathcal{N}$ is a set of ``links" or ``lines" for all $(i,j) \in \mathcal{E}$. Let $V_i :=|V_i|e^{j\angle V_i} \in \mathbb{C}$ and $I_i:= |I_i|e^{j\angle I_i}\in \mathbb{C}$ denote the phasor for the line-to-ground voltage and the current injection at node $i\in\mathcal{N}$. The absolute values $|V_i|$ and $|I_i|$ denote the signal root-mean-square values and $\angle V_i$ and $\angle I_i$ corresponding to the phase angles with respect to the global reference. We collect these variables into complex vectors $\mathbf{v}:=[V_1,V_2,\dots,V_N]^\top \in\mathbb{C}^N$ and $\mathbf{i}:=[I_1,I_2,\dots,I_N]^\top \in \mathbb{C}^N$. We denote the complex admittance of line $(i,j)\in\mathcal{E}$ by $y_{ij} \in \mathbb{C}$. The admittance matrix $\mathbf{Y}\in \mathbb{C}^{N\times N}$ is given by
\begin{equation}\label{admittanceMatrix}
\mathbf{Y}_{ij} = \left\{ \begin{array}{ll}
\sum_{l \sim i} y_{il} + y_{ii} & \textrm{if $i=j$}\\
-y_{ij} & (i,j) \in \mathcal{E} \\
0 &  (i,j) \notin \mathcal{E}
\end{array} \right .,
\end{equation}
where $l \sim i$ indicates connection between node $l$ and node $i$, and $y_{ii}$ is the self admittance of node $i$ to the ground.

Node 0 is modelled as a slack bus. The other nodes are modelled as PQ buses for which the injected complex power is specified. The admittance matrix can be partitioned as
\begin{equation}\nonumber
\begin{bmatrix}
I_0\\
\mathbf{i}
\end{bmatrix} =
\begin{bmatrix}
y_{00} & \bar{y}^\top  \\
\bar{y} & \mathbf{Y}
\end{bmatrix}
\begin{bmatrix}
V_0\\
\mathbf{v}
\end{bmatrix}.
\end{equation}
The net complex power injection then reads:
\begin{equation}\label{powerbalancewithPCC}
\mathbf{s} = \textrm{diag}(\mathbf{v})\Big(\mathbf{Y}^*(\mathbf{v})^* + \bar{y}^*(V_0)^* \Big),
\end{equation}
where superscript $(\cdot)^*$ indicates the element-wise conjugate of complex vector $\mathbf{\mathbf{v}}$.

To facilitate computational efficiency using convex optimization, here we leverage a linearization of \eqref{powerbalancewithPCC} as follows: 
\begin{equation}
\label{linear_powerflow}
    \mathbf{r} = \mathbf{A}\mathbf{p} + \mathbf{B}\mathbf{q} + \mathbf{r}_0,
\end{equation}
where the parameters $\mathbf{A}$, $\mathbf{B}$ and $\mathbf{r}_0$ can be attained from various linearization methods, e.g., \cite{bernstein2017linear,gan2016online}. In the rest of this paper, the linearized coefficient matrices $\mathbf{A}$ and $\mathbf{B}$ are fixed over times for simplification. From now on, we limit $\mathbf{r}(\mathbf{p},\mathbf{q})$ to the above linearized power flow model \eqref{linear_powerflow}. Recall that $\mathbf{r} \in \mathbb{R}^{N_r}$ represents certain electrical quantities of an operator's interest (e.g., voltage magnitudes, current injections, power injection at the substation and so on). 
%For generality, we consider a multi-phase unbalanced distribution network.
%The reformulation and linearization of \eqref{powerbalancewithPCC} for a mulit-phase network are discussed in \cite{gan2016online}.

% \section{Solving OPF with Measurement Feedback}\label{sec:opf}
 %Motivated by the limited measurement devices, we next  
%and then follow a weighted least square (WLS) state estimation for distribution networks, and we specify that the estimation of the real-time system dynamics can then be exploited in the OPF control algorithm to improve the overall control performance of the network in the presence of fast changing dynamics and inherent errors of measurement.

\subsection{OPF Formulation and Primal-Dual Gradient Algorithm}
In this section, we introduce a general OPF problem and the pertinent gradient algorithm with idealized measurement feedback\footnote{The ``idealized" refers to the full measurement of vector $\mathbf{r}$ without noise.} from nonlinear power flow to reduce modelling errors. The feasible operating regions $\mathcal{Z}_i$ depend on the terminal properties of various dispatchable devices, e.g., inverter-based distributed generators, energy storage systems or small-scale diesel generators. We make the following assumptions.

\begin{assumption}[Slater's condition]\label{assumption_slater}
There exists a strictly feasible point within the operation region $(\mathbf{p},\mathbf{q}) \in \mathcal{Z}$, where $\mathcal{Z}:=\mathcal{Z}_1 \times\ldots\times \mathcal{Z}_N$, so that
\begin{equation}
    \mathbf{g}(\mathbf{r}(\mathbf{p},\mathbf{q})) < 0.\nonumber
\end{equation}
\end{assumption}

\noindent Assumption~\ref{assumption_slater} suffices strong duality for problem \eqref{eq:opt}.

% \subsection{Primal-Dual Gradient Algorithm}
\begin{assumption}\label{assumption_3}
A set of local objective functions $C_i(p_i,q_i), \forall i\in \mathcal{N}$ are continuously differentiable\footnote{The continuous differentiability of objective functions is not necessary to have the bound in Theorem 2, but it contributes to the convergence performance in a large-scale network system.} and strongly convex as functions of $(p_i,q_i)$, and their first order derivatives are bounded within their operation regions indicated as $(p_i,q_i) \in \mathcal{Z}_i, \forall i\in \mathcal{N}$; The system-wise objective function $C_0(\mathbf{p},\mathbf{q})$ is continuously differentiable and convex with its first-order derivative bounded. Furthermore, the constraint function $\mathbf{g}$ is continuously differentiable and convex with bounded derivatives on its domain.
\end{assumption}

Assumption~\ref{assumption_3} guarantees decent and convenient properties in performance analysis and is usually satisfied in real-world implementations, e.g., quadratic cost functions and linear constraints.

We write the regularized Lagrangian
$\mathcal{L}$ for \eqref{eq:opt} as follows:
\begin{equation}\label{eq:L_opf}
\mathcal{L} = \sum_{i\in\mathcal{N}}C_i(p_i,q_i) + C_0(\mathbf{p},\mathbf{q}) + \bm{\mu}^\top \mathbf{g}(\mathbf{r}(\mathbf{p},\mathbf{q})) - \frac{\eta}{2}\|\bm{\mu}\|_2^2,
\end{equation}
where $\bm{\mu}\in\mathbb{R}_{+}^{N_\mu}$ is the dual variable vector associated with the general inequality constraints and we keep the feasible regions $\bm{\mu}\in\mathbb{R}_{+}^{N_\mu}$ and $(\mathbf{p},\mathbf{q})\in\mathcal{Z}$ implicit. To facilitate proof of convergence, the Lagrangian \eqref{eq:L_opf} includes a Tikhonov regularization term $-\frac{\eta}{2}\|\bm{\mu}\|_2^2$ with a prescribed small parameter $\eta$ that introduces bounded discrepancy. The upshot of having a regularized term in Lagrangian \eqref{eq:L_opf} is that the gradient-based approaches can be applied to \eqref{eq:L_opf} to find an approximate solution of the original Lagrangian with improved convergence properties. The discrepancy due to the regularized term is discussed and quantified in \cite{koshal2011multiuser}.

To solve \eqref{eq:opt}, we come to the following saddle-point problem:
\begin{equation}\label{eq:maxmin_L}
    \max_{\bm{\mu}\in\mathbb{R}^{N_\mu}_{+}} \min_{(\mathbf{p},\mathbf{q})\in\mathcal{Z}} \mathcal{L} \left(\mathbf{p}, \mathbf{q},\bm{\mu} \right),
\end{equation}
which can be solved by an iterative primal-dual gradient algorithm to reach the unique saddle-point of \eqref{eq:maxmin_L} as:
\begin{subequations}\label{eq:primaldual}
\begin{eqnarray}
& \mathbf{r}^{k}&=\mathbf{A}\mathbf{p}^{k} + \mathbf{B}\mathbf{q}^{k} + \mathbf{r}_0,\label{eq:gradient_opf_pf}\\
& \begin{bmatrix}
\mathbf{p}^{k+1}\\
\mathbf{q}^{k+1}
\end{bmatrix} & =\begin{bmatrix}
\mathbf{p}^k -\epsilon \nabla_{\mathbf{p}}\mathcal{L}(\mathbf{p}^k,\mathbf{q}^k,\bm{\mu}^k)\\
\mathbf{q}^k -\epsilon \nabla_{\mathbf{q}}\mathcal{L}(\mathbf{p}^k,\mathbf{q}^k,\bm{\mu}^k)
\end{bmatrix}_{\mathcal{Z}},\label{eq:gradient_opf_pq}\\
& \bm{\mu}^{k+1}&=\left[\bm{\mu}^k + \epsilon \nabla_{\bm{\mu}}\mathcal{L}(\mathbf{r}^k,\bm{\mu}^k)\right]_{\mathbb{R}_+^{N_\mu}},\label{eq:gradient_opf_dual}
\end{eqnarray}
\end{subequations}
where $\epsilon >0$ is a constant stepsize to be determined and the operator $[\cdot]_{\mathcal{Z}}$ project onto the feasible set $\mathcal{Z}= \times_{i\in\mathcal{N}}\mathcal{Z}_{i}$ \footnote{We use the projection operator $[\cdot]_{\mathcal{Z}}$ in primal updates instead of $[\cdot]_{\mathcal{Z}_p}$ and $[\cdot]_{\mathcal{Z}_q}$, since the feasible sets of active and reactive power are not independent, but correlated based on the terminal apparent power limits.}. The operator $[\cdot]_{\mathbb{R}_+^{N_\mu}}$ projects onto nonnegative orthant.

We here introduce a compact mapping
\begin{equation}\label{eq:linear_operator}\nonumber
    \Phi:\{\mathbf{p}^k,\mathbf{q}^k, \bm{\mu}^k\} \mapsto  \begin{bmatrix}  \nabla_{\mathbf{p}}\mathcal{L}\left(\mathbf{p}^k,\mathbf{q}^k,\bm{\mu}^k\right) \\
     \nabla_{\mathbf{q}}\mathcal{L}\left(\mathbf{p}^k,\mathbf{q}^k,\bm{\mu}^k\right)\\
    -  \nabla_{\mathbf{\bm{\mu}}}\mathcal{L}\left(\mathbf{r}^k(\mathbf{p}^k,\mathbf{q}^k),\bm{\mu}^k\right)
    \end{bmatrix},
\end{equation}
so that \eqref{eq:primaldual} can be rewritten as:
\begin{equation}
    \mathbf{x}^{k+1} = \left[\mathbf{x}^k -  \epsilon \Phi(\mathbf{x}^k)\right]_{\mathbb{R}_+^{N_\mu} \times \mathcal{Z}},
\end{equation}
where $\mathbf{x}^k :=[(\mathbf{p}^k)^\top,(\mathbf{q}^k)^\top,(\bm{\mu}^k)^\top]^\top$. 
Under Assumption \ref{assumption_3}, it can be shown \cite{zhou2019accelerated} that $\Phi$ is strongly monotone and Lipschitz continuous, i.e., it satisfies for all feasible points $\mathbf{x}_1$ and $\mathbf{x}_2$ and for some constants $M>0$ and $L>0$ we have:
\begin{equation}\label{eq:property_Monotone}
\left(\Phi(\mathbf{x}_1) - \Phi(\mathbf{x}_2)\right)^\top \left(\mathbf{x}_1 - \mathbf{x}_2\right) \geq M\|\mathbf{x}_1 - \mathbf{x}_2\|_2^2,
\end{equation}
\begin{equation}\label{eq:property_Lipschitz}
            \|\Phi(\mathbf{x}_1) - \Phi(\mathbf{x}_2) \|_2^2 \leq L^2 \|\mathbf{x}_1 - \mathbf{x}_2\|_2^2.
\end{equation}
We now have the following convergence results.

\begin{theorem} \label{theorem_1}
Under Assumptions \ref{assumption_slater} and \ref{assumption_3},
%Consider the primal-dual gradient algorithm \eqref{eq:primaldual} for the optimization problem \eqref{eq:opt} based on the regularized Lagrangian \eqref{eq:L_opf}.
if the step size $\epsilon$ satisfies
%There exist the upper bounds for $0< \epsilon \leq \bar{\epsilon}$, which is bounded and small enough
\begin{equation} \label{stepsizecond}
0 < \epsilon < 2M/L^2,
\end{equation}
then algorithm \eqref{eq:primaldual} converges to the unique saddle point of \eqref{eq:maxmin_L}.
\end{theorem}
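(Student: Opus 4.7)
The plan is to establish that the primal-dual update \eqref{eq:primaldual} defines a contraction mapping on the feasible set $\mathbb{R}_+^{N_\mu}\times\mathcal{Z}$, and then invoke Banach's fixed-point theorem. Existence and uniqueness of the saddle point $\mathbf{x}^\star$ follows from Slater's condition (Assumption~\ref{assumption_slater}), convexity of $\mathcal{L}$ in $(\mathbf{p},\mathbf{q})$ and concavity in $\bm\mu$ under Assumption~\ref{assumption_3}, together with strong monotonicity of $\Phi$, which rules out multiple zeros of the associated variational inequality. I would state this first so that $\mathbf{x}^\star$ is well defined and satisfies $\mathbf{x}^\star=[\mathbf{x}^\star-\epsilon\Phi(\mathbf{x}^\star)]_{\mathbb{R}_+^{N_\mu}\times\mathcal{Z}}$.

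Next, I would set $T(\mathbf{x}):=[\mathbf{x}-\epsilon\Phi(\mathbf{x})]_{\mathbb{R}_+^{N_\mu}\times\mathcal{Z}}$ and bound $\|\mathbf{x}^{k+1}-\mathbf{x}^\star\|_2$ by exploiting that projection onto a closed convex set is nonexpansive. Subtracting the fixed-point identity and applying nonexpansiveness yields
\begin{equation}
\|\mathbf{x}^{k+1}-\mathbf{x}^\star\|_2^2 \le \bigl\|(\mathbf{x}^k-\mathbf{x}^\star)-\epsilon\bigl(\Phi(\mathbf{x}^k)-\Phi(\mathbf{x}^\star)\bigr)\bigr\|_2^2. \nonumber
\end{equation}
Expanding the square and plugging in the strong-monotonicity bound \eqref{eq:property_Monotone} on the cross term and the Lipschitz bound \eqref{eq:property_Lipschitz} on the quadratic term gives
\begin{equation}
\|\mathbf{x}^{k+1}-\mathbf{x}^\star\|_2^2 \le (1-2\epsilon M+\epsilon^2 L^2)\,\|\mathbf{x}^k-\mathbf{x}^\star\|_2^2. \nonumber
\end{equation}

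The contraction factor $\rho(\epsilon):=1-2\epsilon M+\epsilon^2 L^2$ is strictly less than $1$ exactly when $\epsilon(2M-\epsilon L^2)>0$, i.e., when $0<\epsilon<2M/L^2$, which is condition \eqref{stepsizecond}. Iterating the bound shows $\|\mathbf{x}^k-\mathbf{x}^\star\|_2\le\rho(\epsilon)^{k/2}\|\mathbf{x}^0-\mathbf{x}^\star\|_2\to 0$, establishing linear convergence to the unique saddle point.

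The routine steps are the expansion and the nonexpansive projection inequality; the place I would be most careful is the preliminary verification that $\Phi$ actually enjoys strong monotonicity with the stated constant $M$ under Assumption~\ref{assumption_3}, since the Lagrangian \eqref{eq:L_opf} is only \emph{convex} (not strongly convex) in $(\mathbf{p},\mathbf{q})$ jointly with $\bm\mu$; strong monotonicity of the saddle-point operator relies on combining strong convexity of the primal cost in $(\mathbf{p},\mathbf{q})$ with the Tikhonov regularizer $-\tfrac{\eta}{2}\|\bm\mu\|_2^2$ that makes the problem strongly concave in $\bm\mu$. I would cite \cite{zhou2019accelerated} for the quantitative form of $M$ and $L$ rather than re-derive them, and then conclude convergence from the contraction argument above.
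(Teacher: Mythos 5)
Your proposal is correct and follows essentially the same route the paper takes: the paper defers the proof of Theorem~\ref{theorem_1} to the cited references precisely on the grounds of strong monotonicity \eqref{eq:property_Monotone} and Lipschitz continuity \eqref{eq:property_Lipschitz} of $\Phi$, and the identical contraction factor $\epsilon^2L^2-2\epsilon M+1$ (with the same nonexpansive-projection step) is the backbone of the appendix proof of Theorem~\ref{theorem_2}. Your caveat about where strong monotonicity comes from (strong convexity of the $C_i$ plus the Tikhonov term $-\tfrac{\eta}{2}\|\bm{\mu}\|_2^2$) is exactly the right thing to flag and matches the paper's reliance on Assumption~\ref{assumption_3} and the regularized Lagrangian.
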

%\begin{proof}
The proof of Theorem \ref{theorem_1} is skipped because it follows the method in \cite{zhou2019accelerated,guo2019solving} based on strong monotonicity and Lipschitz continuity of operator $\Phi(\cdot)$.
%\end{proof}

\subsection{Feedback-Based Implementation}
The optimization problem \eqref{eq:opt} and gradient algorithm \eqref{eq:primaldual} are based on a linearized power flow to guarantee its convexity and prove convergence to the saddle point. However, linearization errors cause the solution of \eqref{eq:primaldual} to be suboptimal or even infeasible for the system with nonlinear power flow. To address this issue, feedback-based online optimization methods \cite{colombino2019online,bernstein2015composable,zhou2017incentive} have been leveraged to reduce the effects of modelling error. In particular, by replacing \eqref{eq:gradient_opf_pf} with the following nonlinear power flow model
\begin{eqnarray}\label{eq:nonlinear_feedback}
\tilde{\mathbf{r}}^{k} &=& f(\mathbf{p}^k,\mathbf{q}^k),
\end{eqnarray}
a more accurate measurement $\tilde{\mathbf{r}}^{k}$ can be used instead of an approximate model, to update the dual variables in \eqref{eq:gradient_opf_dual}. This facilitates a real-time implementation to track the time-varying grid response, with proofs of convergence to a bounded region surrounding the optimum \cite{dall2016optimal,zhou2017incentive}.

To this end, the following information availability and communication structure are usually applied to conduct \eqref{eq:primaldual} in practice. A network coordinator is responsible for maintaining operational constraints by actively monitoring the values of $\mathbf{r}$ and updating $\bm{\mu}$ according to \eqref{eq:gradient_opf_dual}. The network coordinator usually does not know the exact values of $\mathbf{p}$ and $\mathbf{q}$ at local DERs, so the updating of \eqref{eq:gradient_opf_pq} is executed by DERs in a distributed way.

However, another crucial issue of such feedback-based algorithm has been largely overlooked: in practice, there are too few monitoring devices in distribution systems to measure all components of $\mathbf{r}$, and therefore it is not possible to directly implement feedback-based algorithms to solve the problem \eqref{eq:opt}. Our preliminary results \cite{guo2019solving} demonstrated that limited knowledge of system states can lead the OPF controller to cause constraint violations. 

%\subsection{Feedback from State Estimation}
To enable an implementation of feedback-based OPF algorithms in distribution networks, and also to improve performance of algorithms that make use of ``pseudo-measurements'', we integrate a state estimation algorithm based on a sparse set of available measurements, before performing the dual variable update \eqref{eq:gradient_opf_dual}. This allows us to utilize improved information on the network state to make decisions, specifically improving information at nodes without measurement of the grid where there are no direct measurements. Fig.~\ref{fig:OPF_SE_diagram_1} illustrates the proposed OPF framework with state estimation in the loop. 

\begin{figure}[!htbp]
\centering
\includegraphics[width=3.5in]{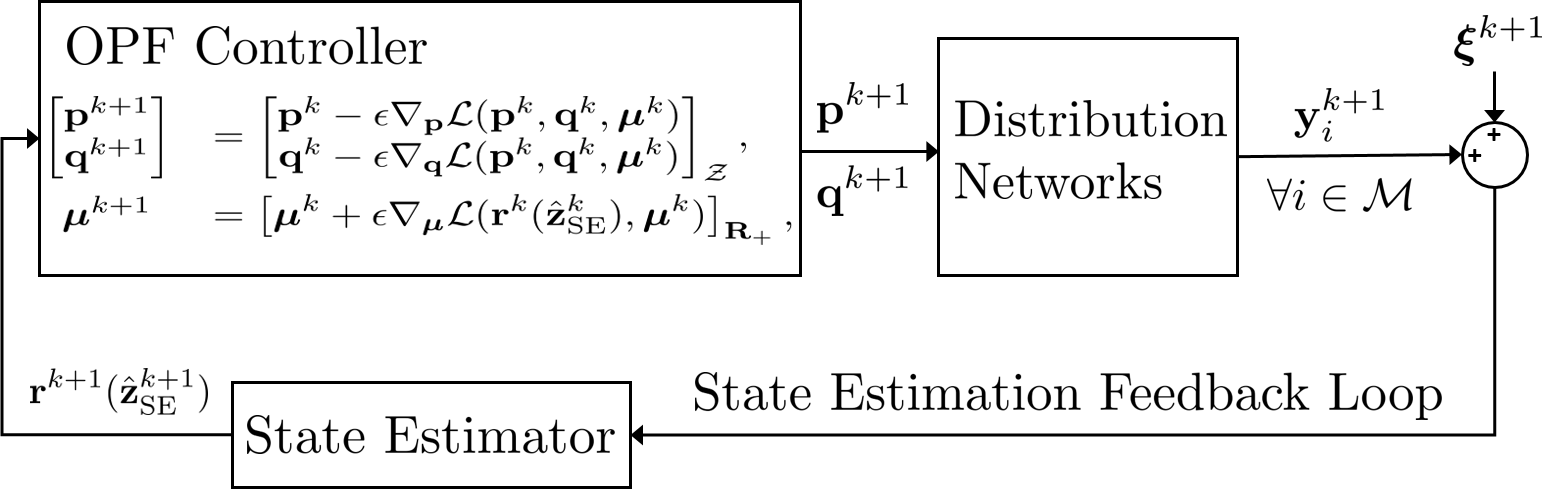}
\caption{The diagram of the proposed optimal power flow problem with SE in the loop.}
\label{fig:OPF_SE_diagram_1}
\end{figure}

\subsection{State Estimation In the Loop}
%{\color{blue}Recall that the network coordinator does not know the exact values of local DER outputs; it instead only has a pseudo-measurement for them based on historic data together with a few measurement devices deployed across the network. $\mathbf{z}^{k}_{\textrm{SE}}=[(\mathbf{p}^{k}_{\textrm{SE}})^\top,(\mathbf{q}^{k}_\textrm{SE})^\top]^{\top}$.}
We define $\mathbf{z}^{k}=[(\mathbf{p}^{k})^\top,(\mathbf{q}^{k})^\top]^{\top}$ as the system states at iteration $k$, and the grid measurement model
\begin{equation} \label{eq:measurements}
    \mathbf{y}^k = \mathbf{h}(\mathbf{z}^{k}) + \bm{\xi}^k,
\end{equation}
where $\mathbf{y}^k\in\mathbb{R}^{N_y}$ is a measurement vector received at iteration $k$ comprising raw noisy measurements from sensors (e.g., voltage magnitude and angles) and pseudo-measurements (e.g., real and reactive power injections at load buses), and measurement function $\mathbf{h}: \mathbb{R}^{N_z} \to \mathbb{R}^{N_y}$. The vector $\bm{\xi}^k\in\mathbb{R}^{N_y}$ models measurement errors, which are assumed to be independently and identically distributed Gaussian noise $\mathbb{N}(0, \bm{\Sigma})$ where $\bm{\Sigma} \in \mathbb{R}_+^{N_y \times N_y}$ is a diagonal matrix. The pseudo-measurements are modeled as sensor measurements corrupted by high-variance Gaussian noise based on historical data (e.g., customer billing data and typical load profile) that provide rough information about variations in the state of the grid \cite{gomez2004power}.

To estimate grid states from the available measurements, we consider the WLS estimator \cite{gomez2004power,wu1990power,zhou2020gradient}:
%When updating the dual variables in \eqref{eq:gradient_opf_dual}, most of the existing OPF frameworks and algorithms assume all system dynamics (of electrical quantities $\mathbf{r}$) are measurable or only admit limited measurement information. {\color{steelblue}YG: some references here}. 
% In practice, it is unrealistic and inefficient to have a real-time measurement information at every location due to the end-user privacy preservation, the extreme large-scale of distribution networks (i.e., up-to 100,000 nodes) and the high burden on the communication structure. On the other hand, only knowledge of a few nodes' information can miss the key features of the area without monitoring, which will definitely degrade the robust performance and operation efficiency.
%To approach the actual operation scheme of distribution networks and also improve the observability and robustness of existing OPF controllers without loss of computational effects, we additionally perform a state estimation based on the existing measurement, solving the sub-problem \eqref{eq:se}, before we update the dual variables \eqref{eq:gradient_opf_dual}. In principle, this allows us to feedback the ``full picture" of network real-time dynamics with the latest information and in turn compute the optimal decisions without any ``blind spots" of the grid. We use the following control diagram in Fig.\ref{fig:OPF_SE_diagram_1} to illustrate the mechanism of the proposed OPF framework with a state estimation feedback loop. 
\begin{equation}\label{eq:se_general}
\hat{\mathbf{z}}^k_{\textrm{SE}} = \underset{\mathbf{z}^k}{\text{argmin}}  \frac{1}{2}\left(\mathbf{y}^k -\mathbf{h}(\mathbf{z}^k)\right)^\top \mathbf{W}\left(\mathbf{y}^k - \mathbf{h}(\mathbf{z}^k)\right),
\end{equation}
where the weight matrix is defined as $\mathbf{W} = \bm{\Sigma}^{-1}$. The estimated quantities of interest $\mathbf{r}^k(\hat{\mathbf{z}}_{\textrm{SE}}^k)$ is uniquely determined by the states $\hat{\mathbf{z}}^k_{\textrm{SE}}=[(\hat{\mathbf{p}}^{k}_{\textrm{SE}})^\top,(\hat{\mathbf{q}}^{k}_\textrm{SE})^\top]^{\top}$ through power flow equations. Existence and uniqueness of a solution to \eqref{eq:se_general} require certain properties of the measurement function.
% We will specify the formulation \eqref{eq:opt} and \eqref{eq:se_general} into a voltage regulation and voltage estimation problems, associated with its detailed formulation in the Appendix.
% \begin{definition}[Full observability]
% The system is fully observable\footnote{This definition should be distinguished from observability of linear dynamical systems. Here, we limit the definition of observability to static SE problems throughout this manuscript\cite{wu1985network}.} (100\%) if $\mathbf{z}=0$ is the only solution for $h(\mathbf{z}) = 0$. Otherwise, the system has unobservable states, which leads to multiple solutions.
% \end{definition}
\begin{definition}[Full Observability\footnote{This definition should be distinguished from observability of linear
dynamical systems. Here, we limit the definition of observability to power system static
state estimation problems \cite{schweppe1970power} throughout this manuscript.}\cite{abur2004power,wu1985network}] A state-to-output system $\mathbf{y}=\mathbf{h}(\mathbf{z})$ is fully observable if $\mathbf{z} = 0$ is the only solution for $h(\mathbf{z}) = 0$, which allows a unique solution to \eqref{eq:se_general}.
\end{definition}
\begin{assumption}\label{assumption:full_observability} 
The system \eqref{eq:measurements} is fully observable by using pseudo-measurement over all nodes.
% There exist an unique solution for power flow equation $\mathbf{r} = f(\mathbf{p},\mathbf{q})$, with appropriate measurement allocation $\mathcal{M}$ and associated measurement $\mathbf{y}$ (i.e., $\mathbf{r} = 0$ is the only solution for $h(\mathbf{r}) = 0$) \cite{schweppe1970power}.
\end{assumption}

% The observability analysis of a state estimator for distribution networks is always a required process to guarantee the performance of SE algorithm \cite{ahmad2018distribution}, \cite{baran1989network}. 

Since distribution networks typically have only a sparse set of real-time measurements from deployed sensors, we use pseudo-measurements over all nodes to ensure full observability. The combinations of real measurements and pseudo-measurements have been observed to be effective in \cite{zhou2020gradient, dvzafic2013real}.
Overall, distribution network state estimation methods have been widely discussed and shown to be accurate and computationally efficient under nominal operating conditions of distribution networks \cite{primadianto2016review}.

% \begin{assumption}\label{assumption_se_bounded}
% The difference between the optimal estimation $\hat{\mathbf{z}}^*$ from \textbf{P2-SE} and the true value $\mathbf{z}$ are bounded.
% \end{assumption}

% Since the optimal estimation is defined as $\hat{\mathbf{z}}^*:= [\hat{\mathbf{p}}^*,\hat{\mathbf{q}}^*,\hat{\mathbf{r}}^*]$
% in relation to $\hat{\mathbf{r}}^*=\mathbf{A}\hat{\mathbf{p}}^*+\mathbf{B}\hat{\mathbf{q}}^*+\mathbf{r}_0$. The estimated electrical quantities are bounded as long as the power injections are located at the pre-defined operation regions $(\mathbf{\hat{p}}^*,\mathbf{\hat{q}}^*) \in \mathcal{Z}$.

% \subsection{State Estimation as Feedback}

Fig.~\ref{fig:OPF_SE_diagram_1} and Algorithm \ref{algorithm:OPF-SE algorithm} illustrate and describe the proposed OPF controller with the state-estimation feedback loop. 
% We do not make any prescribed judgement on the quality of information from two different feedback channels (i.e., SE erorrs and measurement noise). For generality, we feed both information channels back to OPF controllers. This is an interesting direction when we proceed our numerical tests. If some of the system states can be reached from both SE and noisy measurement, which one we should trust more, measurement or estimation? In fact, estimation errors explicitly depend on multiple factors, e.g., choice of estimators, sensor allocation $\mathcal{M}$ associated with measurement selection $\mathbf{y}$. 
% and leave an open question here to inspire more discussions in the future from both analytical and numerical perspectives.
\begin{algorithm}
    \caption{(OPF with SE in the loop)}\label{algorithm:OPF-SE algorithm}
    \begin{algorithmic}[1]
        \Require net-loads initialization $(\mathbf{p}^0,\mathbf{q}^0)$ and $\bm{\mu}^k$
        \For{$k = 0:K$}
            \State $\tilde{\mathbf{r}}^{k} \leftarrow  \textrm{nonlinear power flow}  ~(\mathbf{p}^{k}, \mathbf{q}^{k})$
            \State receive system measurement $\mathbf{y}^{k}$ 
            \State estimate the system states $\hat{\mathbf{z}}^k_{\textrm{SE}}$ based on $\mathbf{y}^{k}$ and calculate the electrical quantities of interest $\mathbf{r}^k(\hat{\mathbf{z}}^k_{\textrm{SE}})$ 
            \State $\begin{bmatrix}
            \mathbf{p}^{k+1}\\
            \mathbf{q}^{k+1}
            \end{bmatrix} = \begin{bmatrix}
            \mathbf{p}^k - \epsilon \nabla_{\mathbf{p}} \mathcal{L}(\mathbf{p}^k,\mathbf{q}^k,
            \bm{\mu}^k, \mathbf{r}^k(\hat{\mathbf{z}}^k_{\textrm{SE}}) )\\
            \mathbf{q}^k - \epsilon \nabla_{\mathbf{q}} \mathcal{L}(\mathbf{p}^k,\mathbf{q}^k,
            \bm{\mu}^k,\mathbf{r}^k(\hat{\mathbf{z}}^k_{\textrm{SE}}) )
            \end{bmatrix}_{\mathcal{Z}}$
            \State $\bm{\mu}^{k+1} =  \left[\bm{\mu}^{k} + \epsilon\nabla_{\bm{\mu}} \mathcal{L}(\mathbf{r}^k(\mathbf{z}^k_{\textrm{SE}}),\bm{\mu}^k)\right]_{\mathbb{R}_+^{N_\mu}}$
        \EndFor
\end{algorithmic}
\end{algorithm}
Note that the step 2 in Algorithm \ref{algorithm:OPF-SE algorithm} is not implemented in the proposed OPF controller, but instead is a physical response of the power system to the up-to-date nodal power injections $(\mathbf{p}^k, \mathbf{q}^k)$. We utilize SE in the loop to compute a state estimate $\hat{\mathbf{r}}^k$, which then contributes to the update of dual variables $\mathbf{\bm{\mu}}^{k+1}$ in step 6. Our numerical experiments in Section \ref{sec:num} compare this approach with the direct use of noisy measurements and pseudo-measurements without any estimation scheme.
\subsection{Convergence Analysis}

To analyze the impact of the power flow model adopted by the proposed framework, we first define $\tilde{\Phi}(\cdot)$ as the gradient mapping based on a nonlinear power flow model:
\begin{equation}\nonumber
    \tilde{\Phi}:\{\mathbf{p}^k,\mathbf{q}^k, \bm{\mu}^k\} \mapsto  \begin{bmatrix}
     \nabla_{\mathbf{p}}\mathcal{L}\left(\mathbf{p}^k,\mathbf{q}^k,\bm{\mu}^k\right) \\
     \nabla_{\mathbf{q}}\mathcal{L}\left(\mathbf{p}^k,\mathbf{q}^k,\bm{\mu}^k\right)\\
    -  \nabla_{\mathbf{\bm{\mu}}}\mathcal{L}\left(\tilde{\mathbf{r}}^k(\mathbf{p}^k,\mathbf{q}^k),\bm{\mu}^k\right)
    \end{bmatrix},
\end{equation}
where different from the mapping $\Phi$ defined for \eqref{eq:primaldual}, the linearized power flow $\mathbf{r}^k(\mathbf{p}^k, \mathbf{q}^k)$ in \eqref{eq:gradient_opf_pf} is replaced with a more accurate nonlinear model $\tilde{\mathbf{r}}^k(\mathbf{p}^k,\mathbf{q}^k) = f(\mathbf{p}^k,\mathbf{q}^k)$.

The computations and updates in steps 4--6 of Algorithm 1 are written more explicitly as
\begin{subequations}\label{eq:gradient_SE_OPF_Nonlinear}
\begin{equation}\label{eq:gradient_voltage_from_SE}
\begin{aligned}
~~~\hat{\mathbf{z}}^k_{\textrm{SE}} = ~& \underset{\mathbf{z}^k}{\text{argmin}}  \frac{1}{2}\left(\mathbf{y}^k -h(\mathbf{z}^k)\right)^\top\mathbf{W}\left(\mathbf{y}^k -h(\mathbf{z}^k)\right), 
\end{aligned}
\end{equation}
\begin{equation}\label{eq:gradient_p}
\hspace{-0.3mm}
\begin{aligned}
\begin{bmatrix}
\mathbf{p}^{k+1}\\
\mathbf{q}^{k+1}
\end{bmatrix} = & 
\begin{bmatrix}\mathbf{p}^{k} - \epsilon \big(\nabla_{\mathbf{p}}C(\mathbf{p}^{k},\mathbf{q}^{k}) + \nabla_{\mathbf{p}}C_0(\mathbf{p}^{k},\mathbf{q}^{k})\\ +\mathbf{A}^\top\nabla^\top_{\mathbf{r}}\mathbf{g}({\color{black}\mathbf{r}^k(\hat{\mathbf{z}}^k_{\textrm{SE}})}), \bm{\mu}^k\big) \\
\mathbf{q}^{k} - \epsilon \big(\nabla_{\mathbf{q}}C(\mathbf{p}^{k},\mathbf{q}^{k}) + \nabla_{\mathbf{q}}C_0(\mathbf{p}^{k},\mathbf{q}^{k})\\
 +\mathbf{B}^\top\nabla^\top _{\mathbf{r}}\mathbf{g}({\color{black}\mathbf{r}^k(\hat{\mathbf{z}}^k_{\textrm{SE}})}), \bm{\mu}^k\big)
\end{bmatrix}_{\mathcal{Z}},
\end{aligned}
\end{equation}
\begin{equation}\label{eq:gradient_mu_upper}
\begin{aligned}
\bm{\mu}^{k+1} = \left[\bm{\mu}^k + \epsilon \left(\mathbf{g}(\mathbf{r}^k(\hat{\mathbf{z}}_{\textrm{SE}}^{k})) - \eta\bm{\mu}^k\right) \right]_{\mathbb{R}_{+}^{N_\mu}},~~~~~~~
\end{aligned}
\end{equation}
\end{subequations}
%This iteration \eqref{eq:gradient_SE_OPF_Nonlinear}, 
%associated with step 1 to step 6 in Algorithm \ref{algorithm:OPF-SE algorithm}, is performed until convergence. 
where $C(\mathbf{p}^k,\mathbf{q}^k):= \sum_{i\in\mathcal{N}}C_i(p_i,q_i)$.
% \begin{assumption}
% For all time steps $k$, there exists a constant $0< \psi < +\infty$ to bound the difference between the actual system states and the states attained from various state estimators \eqref{eq:se_general}
% such that, $\| \mathbf{z}^{k} - \mathbf{\hat{z}}^{k*} \|_2 < \psi $.
% \end{assumption}
%As the state estimation in distribution networks has been widely discussed for different applications \cite{primadianto2016review}, the existing literature shows that these types of methods lead to an accurate and computationally  efficient approximation under nominal operating condition. 
With that we define the gradient mapping with SE in the loop as $\overline{\Phi}(\cdot)$, where the dual variables $\bm{\mu}$ are updated using the SE result from \eqref{eq:gradient_voltage_from_SE}:
\begin{equation}\nonumber
    \overline{\Phi}:\{\mathbf{p}^k,\mathbf{q}^k, \bm{\mu}^k\} \mapsto  \begin{bmatrix}
     \nabla_{\mathbf{p}}\mathcal{L}\left(\mathbf{p}^k,\mathbf{q}^k,\bm{\mu}^k\right) \\
     \nabla_{\mathbf{q}}\mathcal{L}\left(\mathbf{p}^k,\mathbf{q}^k,\bm{\mu}^k\right)\\
    -  \nabla_{\mathbf{\bm{\mu}}}\mathcal{L}\left(\mathbf{r}^k(\mathbf{\hat{p}}^k_{\textrm{SE}},\mathbf{\hat{q}}_{\textrm{SE}}^k),\bm{\mu}^k\right)
    \end{bmatrix}.
\end{equation}
% We also define other two mappings: the SE in the loop operator $\overline{\Phi}(\mathbf{x}^k)$, where the gradient updates based on SE results $\hat{\mathbf{r}}^k$ from \eqref{eq:se_general} to update the dual variables
% \color{blue}
% \begin{equation}\label{eq:SE_In_the_Loop_operator}\nonumber
%     \overline{\Phi}:\{\mathbf{p}^k,\mathbf{q}^k, \bm{\mu}^k\} \mapsto  \begin{bmatrix}
%      \nabla_{\mathbf{p}}\mathcal{L}|_{\mathbf{p}^k,\mathbf{q}^k,\bm{\mu}^k} \\
%      \nabla_{\mathbf{q}}\mathcal{L}|_{\mathbf{p}^k,\mathbf{q}^k,\bm{\mu}^k}\\
%     -  \nabla_{\mathbf{\bm{\mu}}}\mathcal{L}|_{\mathbf{\hat{r}}^k,\bm{\mu}^k}
%     \end{bmatrix}.
% \end{equation}
% \color{black}
% Since the sensor measurement $\mathbf{y^k}$ arises from the nonlinear physical system response \eqref{eq:nonlinear_feedback}, we use the nonlinear power flow mapping $\tilde{\Phi}(\mathbf{x}^k)$ to capture these difference, 
% \color{blue}
% \begin{equation}\nonumber
%     \tilde{\Phi}:\{\mathbf{p}^k,\mathbf{q}^k, \bm{\mu}^k\} \mapsto  \begin{bmatrix}
%      \nabla_{\mathbf{p}}\mathcal{L}|_{\mathbf{p}^k,\mathbf{q}^k,\bm{\mu}^k} \\
%      \nabla_{\mathbf{q}}\mathcal{L}|_{\mathbf{p}^k,\mathbf{q}^k,\bm{\mu}^k}\\
%     -  \nabla_{\mathbf{\bm{\mu}}}\mathcal{L}|_{\mathbf{r}^k = f(\mathbf{p}^k,\mathbf{q}^k),\bm{\mu}^k}
%     \end{bmatrix},
% \end{equation}
% where the linearized power flow $\mathbf{r}^k(\mathbf{p}^k, \mathbf{q}^k)$ is replaced with its nonlinear counterpart $\mathbf{r}^k = f(\mathbf{p}^k,\mathbf{q}^k)$ based on \eqref{eq:nonlinear_feedback}.
% \color{black}
\begin{assumption}\label{assumption_variance}
The squared error in the gradient mapping caused by state estimation has a uniform bound, i.e., there exists $\alpha > 0$ such that
    \begin{equation}\label{eq:SE_Convarnace}
        \mathbf{E}\Big[\|\Phi(\mathbf{x}^k) - \overline{\Phi}(\mathbf{x}^k) \|_2^2\Big]:=\sigma_k^2 \leq \alpha, \quad \forall \mathbf{x}^k.
        \end{equation}
\end{assumption}
\noindent This bound always exists and is realistic due to the physical limits of network states. 
%This statistical information, which describes the uncertainty source of SE in the loop, 
It measures the influence of noisy measurements on the SE in the loop framework, which will be incorporated in our stochastic convergence analysis of Algorithm \ref{algorithm:OPF-SE algorithm}.
%We pose a general bound on the SE in the loop in Assumption \ref{assumption_variance} to understand how the noisy measurements iteratively interact with the feedback optimization schemes. 
At the saddle point of \eqref{eq:maxmin_L}, the squared error caused by state estimation is:
\begin{equation}\nonumber
\sigma_*^2 := \mathbf{E}\Big[\|\Phi(\mathbf{x}^*) - \overline{\Phi}(\mathbf{x}^*) \|_2^2 \Big].
\end{equation}
\begin{assumption}\label{assumption:nonlinear_bounds}
The squared distance between the gradient mapping with SE in-the-loop and that with the nonlinear power flow model in \eqref{eq:nonlinear_feedback} has a uniform bound, i.e., there exists $\rho > 0$ such that 
    \begin{equation}\label{eq:nonlinear_discrepancy}
        \|\overline{\Phi}(\mathbf{x}^k) - \tilde{\Phi}(\mathbf{x}^k) \|_2^2 \leq \rho, \quad \forall \mathbf{x}^k.
    \end{equation}
\end{assumption}
%\noindent This discrepancy also includes the difference raised by the linear estimator and real values based on the nonlinear power flow. 
\begin{theorem}\label{theorem_2}
Suppose the step size $\epsilon$ satisfies the condition \eqref{stepsizecond} from Theorem \ref{theorem_1}. Under Assumptions \ref{assumption_variance}--\ref{assumption:nonlinear_bounds}, the sequence $\{\mathbf{x}^k\}$ generated by Algorithm \ref{algorithm:OPF-SE algorithm} satisfies
\begin{equation}\label{eq:upper_bounds_Nonlinear}
    \lim_{k\to\infty} \sup \mathbf{E}\Big[\|\mathbf{x}^{k+1} - \mathbf{x}^*\|_2^2\Big] = \frac{\rho + 3\alpha }{2M/\epsilon  - L^2},
\end{equation}
where $\mathbf{x}^*=[(\mathbf{p}^*)^\top, (\mathbf{q}^*)^\top, (\bm{\mu}^*)^\top ]^\top$ is saddle point of $\mathcal{L}$ in \eqref{eq:maxmin_L}.
\end{theorem}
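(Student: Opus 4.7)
The plan is to establish a contractive-plus-noise recursion on $\mathbf{E}\|\mathbf{x}^{k+1} - \mathbf{x}^*\|_2^2$ that yields the steady-state bound \eqref{eq:upper_bounds_Nonlinear}. I first use the saddle-point KKT identity $\mathbf{x}^* = [\mathbf{x}^* - \epsilon \Phi(\mathbf{x}^*)]_{\mathbb{R}_+^{N_\mu}\times\mathcal{Z}}$. Since Algorithm~\ref{algorithm:OPF-SE algorithm} iterates as $\mathbf{x}^{k+1} = [\mathbf{x}^k - \epsilon \overline{\Phi}(\mathbf{x}^k)]_{\mathbb{R}_+^{N_\mu}\times\mathcal{Z}}$, the non-expansiveness of the Euclidean projection onto $\mathbb{R}_+^{N_\mu}\times\mathcal{Z}$ produces
\begin{equation*}
\|\mathbf{x}^{k+1} - \mathbf{x}^*\|_2^2 \leq \|\mathbf{x}^k - \mathbf{x}^* - \epsilon\big(\overline{\Phi}(\mathbf{x}^k) - \Phi(\mathbf{x}^*)\big)\|_2^2.
\end{equation*}
Expanding the right-hand side isolates a ``noise-free'' contribution $\Phi(\mathbf{x}^k) - \Phi(\mathbf{x}^*)$, for which strong monotonicity \eqref{eq:property_Monotone} supplies the cross-term estimate $-2\epsilon M\|\mathbf{x}^k-\mathbf{x}^*\|_2^2$ and Lipschitz continuity \eqref{eq:property_Lipschitz} supplies the squared-term estimate $\epsilon^2 L^2\|\mathbf{x}^k-\mathbf{x}^*\|_2^2$, exactly as in the proof of Theorem~\ref{theorem_1}.

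Second, I would absorb the residual perturbation $\overline{\Phi}(\mathbf{x}^k) - \Phi(\mathbf{x}^k)$ by inserting the auxiliary mapping $\tilde{\Phi}(\mathbf{x}^k)$ and splitting it via the triangle inequality into the linearization-versus-SE piece $\overline{\Phi}(\mathbf{x}^k) - \tilde{\Phi}(\mathbf{x}^k)$, bounded deterministically by $\sqrt{\rho}$ through Assumption~\ref{assumption:nonlinear_bounds}, and the aggregate SE piece $\overline{\Phi}(\mathbf{x}^k) - \Phi(\mathbf{x}^k)$, bounded in expected squared norm by $\alpha$ through Assumption~\ref{assumption_variance}. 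Young's inequality with carefully tuned parameters is then applied to each cross term produced by this expansion, arranged so that (i) the $\|\mathbf{x}^k-\mathbf{x}^*\|_2^2$ coefficient remains exactly $1 - 2\epsilon M + \epsilon^2 L^2$, keeping the step-size condition unchanged, (ii) the deterministic $\rho$ contribution appears once from the linearization-gap squared norm, and (iii) three copies of $\alpha$ accumulate -- one from the squared SE-error term in the quadratic expansion and two from cross-product terms converted into bounded squared-norm contributions. Taking expectation under \eqref{eq:SE_Convarnace} then yields
\begin{equation*}
\mathbf{E}\|\mathbf{x}^{k+1} - \mathbf{x}^*\|_2^2 \leq (1 - 2\epsilon M + \epsilon^2 L^2)\,\mathbf{E}\|\mathbf{x}^k - \mathbf{x}^*\|_2^2 + \epsilon^2(\rho + 3\alpha).
\end{equation*}

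Finally, the step-size hypothesis \eqref{stepsizecond} inherited from Theorem~\ref{theorem_1} guarantees that $r := 1 - \epsilon(2M - \epsilon L^2) \in (0,1)$; unrolling the recursion and letting $k \to \infty$ gives the geometric-series tail $\epsilon^2(\rho+3\alpha)/(1-r) = (\rho+3\alpha)/(2M/\epsilon - L^2)$, matching \eqref{eq:upper_bounds_Nonlinear}. I expect the main obstacle to be coordinating the Young's-inequality parameters across the three cross and square terms produced by the triangle-inequality decomposition: a naive application would insert spurious $(1+\delta)$ factors multiplying $\epsilon^2 L^2$, which either tighten the step-size condition beyond \eqref{stepsizecond} or inflate the coefficient multiplying $\alpha$. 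A secondary challenge is that the SE perturbation at step $k$ is correlated with $\mathbf{x}^k$ through the accumulated measurement-noise history, so the cross terms involving this perturbation must be controlled via Cauchy--Schwarz together with the uniform second-moment bound in Assumption~\ref{assumption_variance} rather than by a conditional unbiasedness argument that would annihilate them in expectation.
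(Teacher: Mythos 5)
Your proposal follows essentially the same route as the paper's proof: non-expansiveness of the projection, insertion of $\tilde{\Phi}(\mathbf{x}^k)$ and $\Phi(\mathbf{x}^k)$ by adding and subtracting, bounding the resulting perturbation terms via Assumptions~\ref{assumption_variance}--\ref{assumption:nonlinear_bounds} to obtain the contraction-plus-noise recursion with factor $1-2\epsilon M+\epsilon^2L^2$ and noise term $\epsilon^2(\rho+\sigma_k^2+2\sigma_*^2)\leq\epsilon^2(\rho+3\alpha)$, and then summing the geometric series to get $(\rho+3\alpha)/(2M/\epsilon-L^2)$. The ``main obstacle'' you flag---that Young's inequality on the cross terms would either perturb the contraction factor or inflate the constant---is precisely the step the paper finesses by simply dropping those cross terms under the label ``triangle inequality'' (its three copies of $\alpha$ arise as $\sigma_k^2+2\sigma_*^2$, one evaluated at the iterate and two at the saddle point, rather than from your cross-term accounting), so your concern is well placed but is not actually resolved by the paper's own argument either.
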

\begin{proof}
See Appendix.
\end{proof}

The result \eqref{eq:upper_bounds_Nonlinear} from Theorem \ref{theorem_2} provides an upper bound on the expected squared distance between the sequence $\{\mathbf{x}^k | ~ \mathbf{x}^k := [(\mathbf{p}^k)^\top, (\mathbf{q}^k)^\top, (\bm{\mu}^k)^\top ]^\top, k \leq K, K \to \infty \}$ generated by our proposed OPF with SE in-the-loop algorithm \eqref{eq:gradient_SE_OPF_Nonlinear} and the saddle point $\mathbf{x}^*$ of \eqref{eq:maxmin_L}. This analytical bound indicates that our proposed approach has robust performance to estimation errors and measurement noise. 
\begin{itemize}
\item[1.] \emph{Inherent Measurement Noise:} The online measurements by PMUs are typically within $1\% \sim 2\%$ of the actual values. The pseudo-measurements of active and reactive power can be regarded as a rough initialization to time-varying actual values of power (with up to 50\% variations). These errors can be reduced through the estimation phase in \eqref{eq:se_general}, which improves the decisions of the OPF controller with SE feedback \eqref{eq:gradient_voltage_from_SE}, improving robustness to measurement noise and power variability;
\item[2.] \emph{Linearization Approximation Errors:} The update of $[\mathbf{p},\mathbf{q}]$ in \eqref{eq:gradient_p} and maybe also the state estimator \eqref{eq:gradient_voltage_from_SE} utilize the linearized power flow model \eqref{linear_powerflow} to promote computational efficiency. The discrepancy between linearized and nonlinear power flow model is quantified in \eqref{eq:nonlinear_discrepancy} by $\rho$, 
%when set-points $(\mathbf{p}^k,\mathbf{q}^k)$ are actually implemented to the nonlinear network power flow.
when the electric quantities of interest $\tilde{\mathbf{r}}$ are realized by the nonlinear power flow of the physical network.
\end{itemize}

\noindent To summarize, the discrepancy in \eqref{eq:upper_bounds_Nonlinear} depends on 1) the step size of gradient update, the monotonicity and Lipschitz coefficients of objective and constraint functions; 2) the difference between nonlinear and linearized power flow models, and 3) a general bound to the error in gradient mapping caused by state estimation. Our result reveals that the state estimation errors will not be accumulated with the increasing number of algorithm iterations. 
Moreover, the bound characterized in \eqref{eq:upper_bounds_Nonlinear} can be made arbitrarily small by adopting sufficiently small $\epsilon$.

\begin{remark}(Feedback-based OPF). For an OPF problem in a large-scale distribution network, our method can effectively reduce the computational complexity by adopting the linearized power flow model in the problem formulation, and can improve accuracy by compensating for the modelling error with measurements from the physical network. State estimation bridges the gap between the theoretical design where all the quantities of interest are measured and the reality where only a few nodes are equipped with measuring devices. Analyses for the model-based feedback algorithms to solve OPF can be found in a few recent works \cite{colombino2019online,dall2016optimal}. Extending the convergence analysis to  the nonlinear power flow model will be pursued as a future research effort.
\end{remark}%\color{blue}
%\begin{remark}(Information Structure and Communication Mechanism). The SE problem is posed and solved by network operators, which estimates nodal power injection $(\mathbf{p}_{\textrm{SE}}^k,\mathbf{q}_{\textrm{SE}}^k)$ and the electrical quantities of interests $\hat{\mathbf{r}}^k = f(\mathbf{\hat{p}}_{\textrm{SE}}^k,\mathbf{\hat{q}}_{\textrm{SE}}^k)$ via sensor measurements and pseudo-measurements $\mathbf{y}^k$. The OPF controllers pursuit the gradient steps to local power settings $(\mathbf{p}^k,\mathbf{q}^k)$ at the user-end according to the estimated electrical quantities of interests $\hat{\mathbf{r}}^k$ to minimize costs and satisfy network constraints.
%\end{remark}\color{black}

\subsection{Estimation Error Analysis}\label{se:error_analysis}
In this subsection, we analytically quantify the errors of the SE algorithm under a linearized measurement model:
\begin{equation}\label{eq:se_linear}\nonumber
    \mathbf{y}^k = \mathbf{H}\mathbf{z}^k + \bm{\xi}^k,
\end{equation}
where $\mathbf{H} \in \mathbb{R}^{N_z\times N_y}$ is a measurement matrix that could be obtained by linearizing the nonlinear measurement function around a given nominal operating point. 

The closed-form solution to the linear WLS problem
\begin{equation}\label{eq:}\nonumber
\underset{\mathbf{z}^k}{\min}  \frac{1}{2}\left(\mathbf{y}^k -\mathbf{H}\mathbf{z}^k\right)^\top \mathbf{W}\left(\mathbf{y}^k -\mathbf{H}\mathbf{z}^k\right),
\end{equation}
is $\hat{\mathbf{z}}^k_{\textrm{SE}} = \left(\mathbf{H}^\top \mathbf{W}\mathbf{H}\right)^{-1} \mathbf{H}^\top \mathbf{W}\mathbf{y}^k$. When the matrix $\mathbf{H}^\top \mathbf{W}\mathbf{H}$ is non-singular (which occurs when $\mathbf{W}$ is positive definite and $\mathbf{H}$ has full column rank), the estimate can be expressed as
\begin{equation*}
\begin{aligned}
    \hat{\mathbf{z}}_{\textrm{SE}}^k & = \left(\mathbf{H}^\top \mathbf{W}\mathbf{H}\right)^{-1} \mathbf{H}^\top \mathbf{W} \mathbf{H} \mathbf{z}^k + \left(\mathbf{H}^\top \mathbf{W}\mathbf{H}\right)^{-1}\mathbf{H}^\top \mathbf{W} \bm{\xi}^k\\
& = \mathbf{z}^k + \left(\mathbf{H}^\top \mathbf{W}\mathbf{H}\right)^{-1}\mathbf{H}^\top \mathbf{W} \bm{\xi}^k.
\end{aligned}
\end{equation*}
The WLS estimator is unbiased (since  $\mathbf{E}\left[\hat{\mathbf{z}}_{\textrm{SE}}^k \right] = \mathbf{z}^k$ with zero-mean noise $\bm{\xi}$), and the variance is given by $\textrm{Var}\left[\hat{\mathbf{z}}_{\textrm{SE},j}^k\right] = \sum_{i=1}^n \Gamma_{ji}\sigma_i^2$,
where $\Gamma_{ji}$ denotes the $ji$-th element of $\bm{\Gamma} = \left(\mathbf{H}^\top \mathbf{W}\mathbf{H}\right)^{-1}\mathbf{H}^\top \mathbf{W}$, and $\sigma_i^2$ is the $i$th diagonal element of the measurement covariance matrix $\bf{\Sigma}$.
The confidence interval for state estimate $\hat{\mathbf{z}}^k_{\textrm{SE}}$ can be constructed as
\begin{equation}
\begin{aligned}\nonumber
    & \hat{\mathbf{z}}_{\textrm{SE},j}^k \pm  c \sqrt{\textrm{Var}\left(\hat{\mathbf{z}}_{\textrm{SE},j}^k\right)}  = 
    \hat{\mathbf{z}}_{\textrm{SE},j}^k \pm  c\sqrt{\sum_{i=1}^n \Gamma_{ji}\sigma_i^2},
    \end{aligned}
\end{equation}
where $c$ can be chosen based on the prescribed confidence level. In addition to the bound in Theorem \ref{theorem_2}, the confidence interval provides a numerical performance metric with respect to the state estimation errors. In the next section, we will use this metric to quantify estimation errors caused by noisy voltage measurements and net-loads pseudo-measurements.

\begin{figure}
    \centering
    \includegraphics[width=3in]{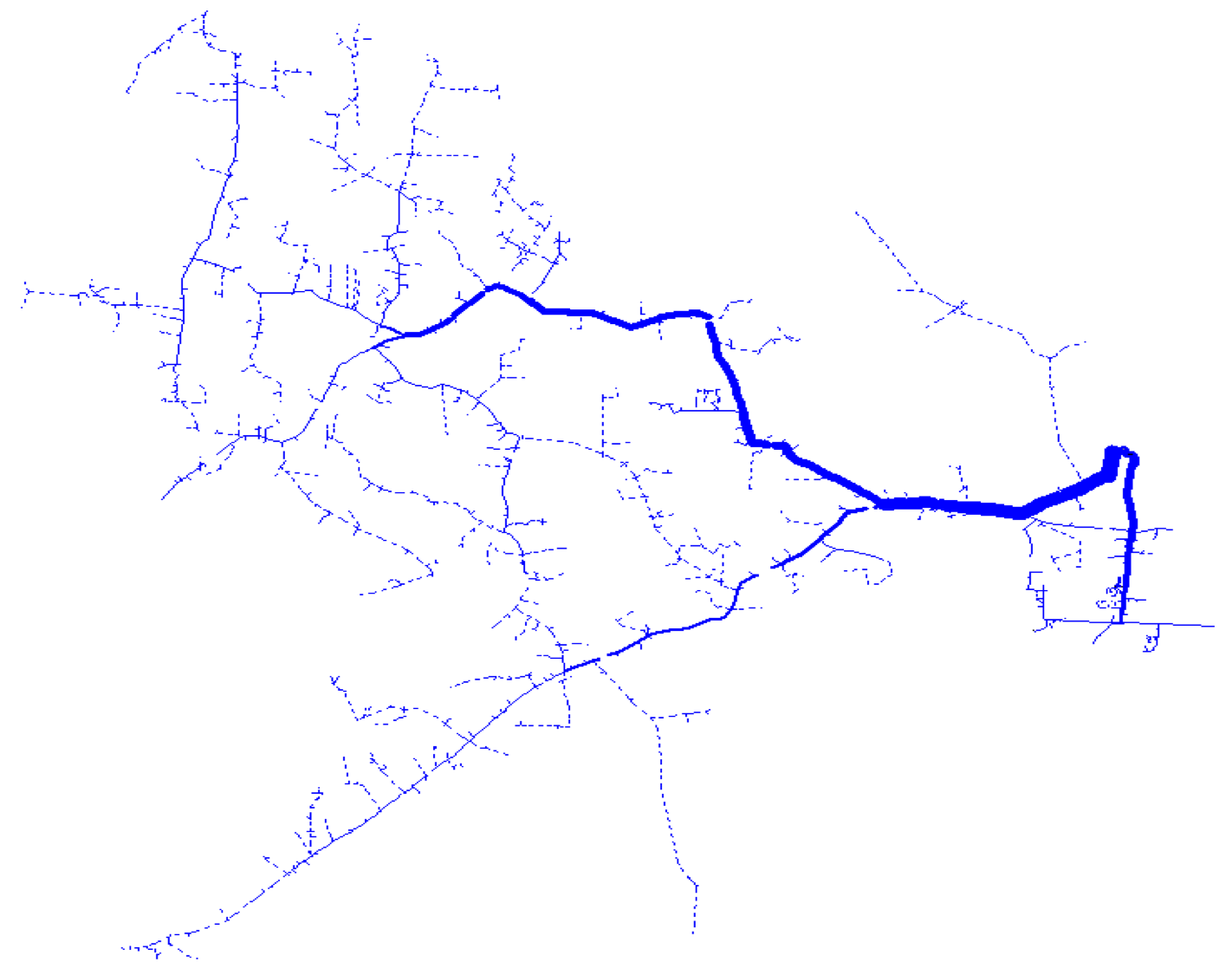}
    \caption{An 11,000-node distribution network. This testbed is constructed by connecting an IEEE 8,500-node distribution network and an EPRI Ckt7 test feeder at PCC. The primary side of this modified feeder is modelled in detail, while the loads on secondary side are merged into distribution transformers. This lumps the 11,000-node testbed into a 4,521-node distribution network.}
    \label{fig:10,000-node testbed}
\end{figure} 

\section{Numerical Results}\label{sec:num}

We conduct numerical experiments to close the loop between OPF and SE. In particular, we explore the tradeoff between the sensing and communication overhead and the performance of OPF controllers in a large network.

We use a modified three-phase unbalanced 4,521-node distribution network (which was reduced from the 11,000-node network in Fig.~\ref{fig:10,000-node testbed} by merging the secondary loads into transformers) to demonstrate the effectiveness and scalability of the proposed OPF solver with SE in the loop. 
This extremely large system is divided into 5 clusters and then we utilized a spatially distributed optimization algorithm for computational affordability. The details of multi-phase power flow modelling and the feasibility of distributed algorithm were discussed in our companion paper \cite{zhou2019accelerated}. Our simulations are conducted on a desktop with AMD Ryzen 7 2700X Eight-Core Processor CPU@3.7GHz, 64GM RAM, Python 3.7 and Windows 10. 

\begin{figure}
    \centering
    \subfigure[normal bound]{\label{fig:voltage_results_095} %% label for first subfigure
    \includegraphics[width=1.672in]{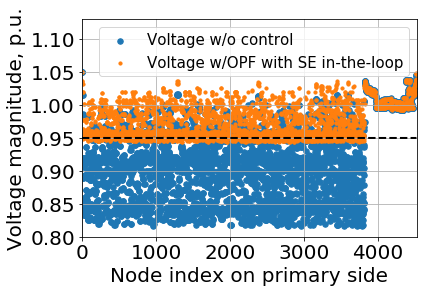}}
    \subfigure[tighter bound]{\label{fig:voltage_results_096} %% label for first subfigure
    \includegraphics[width=1.672in]{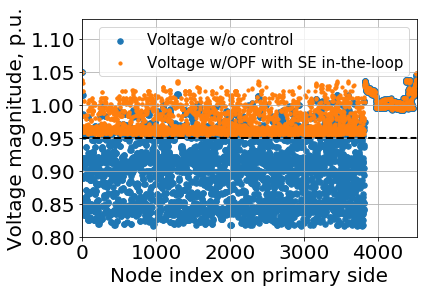}}
    \caption{Voltage profile of OPF controller with SE in the loop. The black dash line indicates the lower voltage bound, i.e., 0.95 p.u.. After we utilize a tighter bound $[0.96,1.05]$ to compensates the inherent errors of SE in the loop, the voltage profile on the right then meets the constraint. }
    \label{fig:control_voltage}
\end{figure}

We consider a voltage regulation problem  where the electrical quantities of interests are voltage magnitudes. In particular, we specify the vector $\mathbf{r}$ as the voltage magnitude $\mathbf{r}:= |\mathbf{v}|:=[|v_1|,\ldots,|v_N|]^\top \in \mathbb{R}^N_{++}$ and consider
\begin{subequations}\label{eq:opt_v}
\begin{eqnarray}\nonumber
\textbf{OPF-V:} & \underset{\mathbf{p},\mathbf{q},|\mathbf{v}|}{\min} & \sum_{i\in\mathcal{N}}C_i(p_i,q_i)+ C_0(\mathbf{p},\mathbf{q}),\nonumber\\
& \text{s.t.}&|\mathbf{v}|=\mathbf{A}\mathbf{p}+\mathbf{B}\mathbf{q}+ |\mathbf{v}_0|,\nonumber\\
&&\underline{\mathbf{v}} \leq  |\mathbf{v}| \leq \overline{\mathbf{v}},\nonumber\\
&& (p_i,q_i)\in\mathcal{Z}_i,\forall i\in\mathcal{N}. \nonumber
\end{eqnarray}
\end{subequations}
The inequality constraints enforce the safety bounds $(\underline{\mathbf{v}},\bar{\mathbf{v}})$ to voltage magnitudes. In particular, we take a linear approximation to AC power flow to express $|\mathbf{v}|$ as a linear function of power injections $(\mathbf{p},\mathbf{q})$. The coefficient matrices $(\mathbf{A},\mathbf{B})$ and constant vector $|\mathbf{v}_0|$ can be attained from numerous linearization methods, e.g., \cite{bernstein2017linear,gan2016online}. The gradient-based OPF controller \eqref{eq:gradient_SE_OPF_Nonlinear} utilizes the online voltage magnitude measurement and voltage estimation to make the system converge. We consider a cost function $C_i(p_i, q_i) = (p_i - p_i^0)^2 + (q_i - q_i^0)^2$ that minimizes the deviation of the power setpoints $(p_i, q_i)$ from their nominal/preferred level $(p_i^0, q_i^0)$ for node $i$. The term $C_0(\mathbf{p}) = \alpha (P_0(\mathbf{p}) - \tilde{P}_0)^2$ penalizes the deviation of the total active power injection $P_0(\mathbf{p})$ at the substation from its preferred values $\tilde{P}_0$. We choose a small weighting factor $\alpha = 0.0005$ to focus on voltage regulation.

The default voltage profile without any control\footnote{We disable all rule-based control of voltage regulators, local capacitors and low-voltage transformers, etc., and only solve the nonlinear power flow.} is calculated by OpenDSS \cite{dugan2010ieee} and shown by blue dots in 
Fig.~\ref{fig:control_voltage}. 
The voltage limits $\overline{\mathbf{v}}$ and $\underline{\mathbf{v}}$ are set to 1.05 and 0.95 p.u. This particular network has a significant under-voltage situation. We implement Algorithm 1 to solve the above voltage regulation problem, while minimizing the objective function. We consider distributed energy resources (DERs) with box constraints. The default net-loads settings can be found in \cite{dugan2010ieee,EPRI2015}. 
%No storage device, PV and distributed diesel generator is included here for simplicity. 
The proposed framework can also be adapted to incorporate more complicated models and objectives for energy storage devices, PVs, distributed diesel generators, et cetera. 
%The proposed framework has the generality and flexibility to additionally include these controllable devices for various control objectives.

%To have a clear picture of voltage level for OPF controller, 
Most literature assumed that the full knowledge of real-time voltage is available to a system operator, which requires unrealistic sensor deployment, heavy communication, and huge investment. To balance information availability and measurement overhead, 
%and achieve an acceptable performance of voltage regulation, 
we randomly gather voltage magnitude measurements at 3.6\% of the nodes (i.e., about 160 nodes) with zero-mean Gaussian noise of 1\% standard deviation.
We also have pseudo-measurements of all the nodal power injections with zero-mean and huge 50\% standard deviation, which guarantees full observability for state estimation. 
%The voltage information of the whole network will be fed back to OPF controller based on the voltage estimation results. 
The estimated active and reactive power injections are the primary estimations in the proposed framework. The estimated voltage magnitudes are then calculated from those primary estimations using a nonlinear power flow model. The detailed model for voltage estimation can be found in our prior work \cite{zhou2020gradient}. 
We implement Algorithm 1 with step-size $7 \times 10^{-4}$ for primal updates and $1 \times 10^{-3}$ for dual updates.

Fig.~\ref{fig:control_voltage} visualizes the voltage profile regulated by the proposed SE in-the-loop method with orange dots. In order to prevent voltage from falling below 0.95 p.u., the net-loads must be curtailed based on the SE feedback information. The voltage at most nodes are bounded within $[0.95,1.05]$. There are a few nodes whose voltage magnitudes slightly violate the safety lower bound, which is caused by voltage estimation errors. 
%The severity of these errors can be quantified using the method in Section \ref{se:error_analysis}.
The average and maximum voltage estimation errors are shown in Fig.~\ref{fig:min_max_SE_Errors} for SE in-the-loop.
For comparison, we also test a scenario where the OPF solvers directly utilize raw noisy measurements of all the voltage magnitudes. The noises of those raw measurements are subjected to zero-mean Gaussian distribution with 1\% standard deviation of their true values. 
From Fig.~\ref{fig:min_max_SE_Errors},
we observe that having SE in the loop will significantly reduce the estimation errors, compared to the direct usage of raw measurements. 
%and the comparison with analytically calculated confidence intervals over each OPF gradient step.} 
Moreover, we utilize the analysis in Section \ref{se:error_analysis} to get Fig.~\ref{fig:SE_errors_CI}, where the average errors under SE in-the-loop are almost always bounded by the analytic 99\% confidence interval over 1000 OPF iterations. 

%Also, the proposed SE in the loop can mitigate the effects of measurement errors on OPF controller performance. 
To resolve the voltage violations caused by estimation errors, we impose a tighter voltage lower bound $0.96$ p.u. based on the analytic confidence interval. As shown in Fig.~\ref{fig:objective_functions}, the network curtails more net-loads to achieve a more conservative voltage profile, which leads to a higher operational cost. We emphasize that there is always a tradeoff between: 1) cost of the measurement system (e.g., number of deployed sensors, communication infrastructure) and 2) OPF controller performance (e.g., robustness, feasibility and optimality). In general, our proposed approach provides utilities and system operators a framework to systematically design OPF controllers under a limited set of sensor measurements.

\begin{figure}[tbhp!]
    \centering
    \includegraphics[width=3.4in]{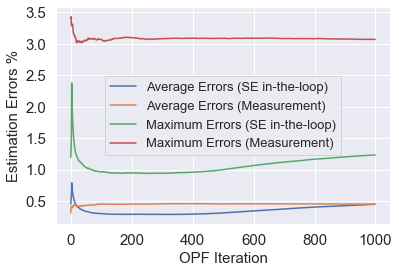}
    \caption{Comparison of estimation errors between SE in the loop and the use of raw voltage measurements. The running average of average/maximum errors show that the SE in the loop yields less error than using raw measurements.}
    \label{fig:min_max_SE_Errors}
\end{figure}

\begin{figure}[tbhp!]
    \centering
    \includegraphics[width=3.4in]{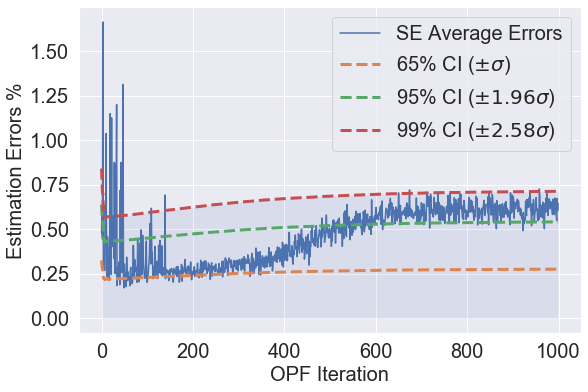}
    \caption{Comparison of the average estimation errors with different confidence intervals over 1000 OPF iterations.}
    \label{fig:SE_errors_CI}
\end{figure}

\begin{figure}[tbhp!]
    \centering
    \includegraphics[width=3.5in]{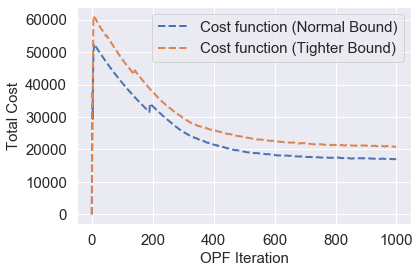}
    \caption{Total cost with SE in the loop over 1000 OPF iterations.}
    \label{fig:objective_functions}
\end{figure}

% \begin{figure}
%     \centering
%     \includegraphics[width=3.5in]{voltage_trajectories.png}
%     \caption{Comparison on estimated voltage and actual voltage of five nodes.}
%     \label{fig:voltage_trajectories}
% \end{figure}
In summary, numerical results conclude that the proposed OPF controller with SE feedback is able to systemically reduce voltage estimation errors at unmeasured nodes and achieve safe voltage regulation.
%, and improve robustness to measurement errors.
The benefits of closing the loop between OPF controllers and SE can be clearly observed from the perspectives of effectiveness, robustness and efficiency. 

% An online extended framework for solving general time-varying OPF-SE problems are presented in our companion Part II \cite{guo2021running2}, which explicitly considers the SE and linearization errors in a stochastic way for computationally-efficient implementation and robustness.

\section{Conclusions and Outlooks}\label{sec:con}
In this paper, we proposed a general optimal power flow controller with state estimation feedback to facilitate the operation of modern distribution networks. The controller depends explicitly on the state estimation results derived from system measurements. In contrast to existing works, our method utilizes a feedback loop to the OPF controller to estimate the system voltages from a limited number of sensors rather than making strong assumptions on full observability or requiring full state measurements. The performance of our design is analyzed and numerically demonstrated. The numerical results demonstrate the effectiveness, scalability, and robustness of the proposed OPF controller with SE in the loop. 

% \color{red}An online implementation of the proposed framework is presented in Part II of this paper \cite{guo2021running2} to deal with the time-varying setting in a distribution network with a high percentage of variable renewable generation.\color{black} 
%The extensive numerical results also demonstrate the robust and computationally efficient performance of the proposed online framework.

Our work launched an initial step to close the loop between control and state estimation in power systems. There are several lines of future work to further explore the benefits and overcome the limitations of this idea, including but not limited to
\begin{itemize}
    \item performance evaluation of various OPF formulations with different SE techniques in the loop;
    \item optimal sensor placement with SE feedback for better OPF performance;
    \item OPF \& SE co-design considering the estimation errors for a more efficient communication structure in a real network;
    \item convergence analysis based on the nonlinear power flow;
    \item performance discussions with different distributed algorithms, e.g., alternating direction method of multipliers (ADMM);
    \item extension to the time-varying power flow linearization.
\end{itemize}

\section*{Acknowledgement}
We would like to thank Dr. Zhiyuan Liu, who was with the Department of Computer Science, the University of Colorado Boulder when the work was done, for his support on the simulation testbed construction. 

%This work was authored in part by the National Renewable Energy Laboratory, operated by Alliance for Sustainable Energy, LLC, for the U.S. Department of Energy (DOE) under Contract No. DE-EE-0007998. Funding provided by U.S. Department of Energy Office of Energy Efficiency and Renewable Energy Solar Energy Technologies Office. The views expressed in the article do not necessarily represent the views of the DOE or the U.S. Government. The U.S. Government retains and the publisher, by accepting the article for publication, acknowledges that the U.S. Government retains a nonexclusive, paid-up, irrevocable, worldwide license to publish or reproduce the published form of this work, or allow others to do so, for U.S. Government purposes.

\bibliographystyle{ieeetr}  
\bibliography{reference} 

\appendix
\subsection*{Proof of Theorem~\ref{theorem_2}}
\begin{proof}
Now we are ready to show the convergence of Algorithm~\ref{algorithm:OPF-SE algorithm}. The expected distance between the generated sequence $\{\mathbf{x}^{k+1}\}$ and the saddle point of $\mathcal{L}$ can be characterized as:
\begin{equation}\label{eq:inequality_theorem2}
\begin{aligned}
& \mathbf{E}\Big[\|\mathbf{x}^{k+1} - \mathbf{x}^*\|_2^2\Big]\\
\leq ~& \mathbf{E}\Big[\|\mathbf{x}^k - \epsilon \tilde{\Phi}(\mathbf{x}^k) - \mathbf{x}^* + \epsilon \Phi(\mathbf{x}^*)\|_2^2\Big]\\
= ~& \mathbf{E}\Big[\|\mathbf{x}^k - \epsilon \tilde{\Phi}(\mathbf{x}^k) + \epsilon \overline{\Phi}(\mathbf{x}^k) - \epsilon \overline{\Phi}(\mathbf{x}^k)  - \mathbf{x}^* + \epsilon \Phi(\mathbf{x}^*) \\ 
& + \epsilon \overline{\Phi}(\mathbf{x}^*) - \epsilon \overline{\Phi}(\mathbf{x}^*) \|_2^2\Big]\\
\leq ~& \mathbf{E}\Big[\|\mathbf{x}^k -\epsilon\overline{\Phi}(\mathbf{x}^k) - \mathbf{x}^* + \epsilon \overline{\Phi}(\mathbf{x}^*) \|_2^2 + \epsilon^2\|\tilde{\Phi}(\mathbf{x}^k) - \overline{\Phi}(\mathbf{x}^k)\|_2^2 \\
& + \epsilon^2\|\Phi(\mathbf{x}^*) - \overline{\Phi}(\mathbf{x}^*)\|_2^2\Big]\\
= ~& \mathbf{E}\Big[\|\mathbf{x}^k -\epsilon\overline{\Phi}(\mathbf{x}^k) + \epsilon\Phi(\mathbf{x}^k) - \epsilon\Phi(\mathbf{x}^k) - \mathbf{x}^*  + \epsilon \overline{\Phi}(\mathbf{x}^*) \\
& + \epsilon\Phi(\mathbf{x}^*) - \epsilon\Phi(\mathbf{x}^*)\|_2^2 + \epsilon^2\| \tilde{\Phi}(\mathbf{x}^k)-\overline{\Phi}(\mathbf{x}^k) \|_2^2 \\
& + \epsilon^2\|\Phi(\mathbf{x}^*)  - \overline{\Phi}(\mathbf{x}^*)\|_2^2\Big]\\ 
\leq ~& \mathbf{E}\left[\|\mathbf{x}^k -\epsilon\Phi(\mathbf{x}^k) - \mathbf{x}^* + \epsilon \Phi(\mathbf{x}^*) \|_2^2 \right] \\
& + \mathbf{E}\Big[\epsilon^2\|\overline{\Phi}(\mathbf{x}^k) - \Phi(\mathbf{x}^k)\|_2^2 + 2\epsilon^2\|\Phi(\mathbf{x}^*) - \overline{\Phi}(\mathbf{x}^*)\|_2^2\Big]\\
& + \epsilon^2\|\tilde{\Phi}(\mathbf{x}^k) - \bar{\Phi}(\mathbf{x}^k)\|_2^2  \\
\leq ~& \mathbf{E}\left[\|\mathbf{x}^k -\epsilon\Phi(\mathbf{x}^k) - \mathbf{x}^* + \epsilon \Phi(\mathbf{x}^*) \|_2^2\right] + \epsilon^2\left(\rho + \sigma^2_k + 2\sigma_*^2\right)\\ 
\leq ~& \mathbf{E}\left[\|\mathbf{x}^k - \mathbf{x}^*\|_2^2\right] + \mathbf{E}\left[\|\epsilon \Phi(\mathbf{x}^k) - \epsilon\Phi(\mathbf{x}^*) \|_2^2 \right] \\
& - 2\epsilon \left(\Phi(\mathbf{x}^k) - \Phi(\mathbf{x}^*)\right)^\top \left(\mathbf{x}^k - \mathbf{x}^*\right)+ \epsilon^2\left(\rho + \sigma^2_k + 2\sigma_*^2\right)\\
\leq ~& \left(\epsilon^2L^2 - 2\epsilon M + 1 \right)\mathbf{E}\left[\|\mathbf{x}^k - \mathbf{x}^*\|_2^2\right] + \epsilon^2\left(\rho + \sigma^2_k + 2\sigma_*^2\right),
\end{aligned}
\end{equation}
where the first inequality is due to the non-expansiveness of the projection operator, the next two inequalities depend on the triangle inequality, the fourth inequality comes from \eqref{eq:SE_Convarnace}--\eqref{eq:nonlinear_discrepancy}, and the last two inequalities are because of the strong monotonicity \eqref{eq:property_Monotone} and Lipschitz continuity \eqref{eq:property_Lipschitz} of the operator $\Phi$. Next, we let $\Delta = \epsilon^2L^2 - 2\epsilon M +1 $ and recursively implement the steps above until $k=0$ to have
\begin{subequations}
\begin{align}
    &\mathbf{E}\Big[\|\mathbf{x}^{k+1} - \mathbf{x}^*\|_2^2\Big] \nonumber\\ 
    \leq ~ &\Delta^{k+1}\|\mathbf{x}^0 - \mathbf{x}^*\|_2^2 + \epsilon^2\left(\rho + 2\sigma^2_* \right) \left(\frac{1-\Delta^{k+1}}{1-\Delta}\right)\nonumber \\
    \label{eq:proof_SE_tight_bounds} & + \epsilon^2 \sum_{i=0}^{k}\Delta^{k-i}\sigma_k^2 \\
    \leq ~ & \Delta^{k+1}\|\mathbf{x}^0 - \mathbf{x}^*\|_2^2 + \epsilon^2\left(\rho + 2\alpha \right) \left(\frac{1-\Delta^{k+1}}{1-\Delta}\right) \nonumber \\
    & \label{eq:proof_SE_relax_bounds} + \epsilon^2 \frac{1-\Delta^{k+1}}{1-\Delta}\alpha. 
\end{align}
\end{subequations}
By applying the SE estimation variance bound in Assumption \ref{assumption_variance}, \eqref{eq:proof_SE_tight_bounds} is relaxed to \eqref{eq:proof_SE_relax_bounds}. Then we have the step size chosen as $0<\epsilon\leq\bar{\epsilon}<2M/L^2$ from Theorem \ref{theorem_1}, which leads to $0<\Delta\leq \bar{\epsilon}^2L^2 - 2\bar{\epsilon} M +1 <1$. As $k \to \infty$, $\Delta^{k+1}$ on the right-hand-side in \eqref{eq:proof_SE_relax_bounds} will vanish. Given such $\Delta$ and any feasible initial point $\mathbf{x}^0$, we let $k$ approach infinity to get the expectation of this discrepancy as
\begin{equation}\nonumber
    \lim_{k\to\infty} \sup \mathbf{E}\Big[\|\mathbf{x}^{k+1} - \mathbf{x}^*\|_2^2\Big] = \frac{\rho + 3\alpha}{2 M/\epsilon - L^2}.
\end{equation}
This concludes the proof.
\end{proof}

\end{document}